\numberwithin{equation}{section}
\theoremstyle{plain}
\numberwithin{equation}{section}
\newtheorem{theorem}[equation]{Theorem}
\newtheorem{corollary}[equation]{Corollary}
\newtheorem{proposition}[equation]{Proposition}
\newtheorem{lemma}[equation]{Lemma}
\theoremstyle{definition}
\newtheorem*{definition}{Definition}
\theoremstyle{definition}
\newtheorem{remark}[equation]{Remark}
\theoremstyle{definition}
\newtheorem*{question}{Question}
\newcommand{\A}{\mathscr{A}}
\newcommand{\C}{\mathscr{C}}
\newcommand{\D}{\mathscr{D}}
\newcommand\Aut{\operatorname{Aut}}
\newcommand\Mon{\operatorname{Mon}}
\newcommand\Ext{\operatorname{Ext}}
\newcommand\ot{\otimes}
\newcommand\Hom{\operatorname{Hom}}
\newcommand\vect{\operatorname{Vec}}
\newcommand\VC{V_{\C}}
\newcommand\unit{\mathbf{1}}
\DeclareMathOperator{\Ker}{Ker}
\newcommand{\DOT}{\setlength{\unitlength}{1pt}\begin{picture}(2.5,2)(1,1)\put(2,3){\circle*{2}}\end{picture}}
\newcommand{\bu}{\DOT}
\newcommand{\Coh}{\operatorname{H}\nolimits}
\newcommand{\Ho}{\operatorname{\Coh^{\bu}}\nolimits}
\newcommand{\Maxspec}{\operatorname{MaxSpec}\nolimits}
\newcommand{\m}{\mathfrak{m}}
\def\blx@maxline{77}
\begin{document}
\title{Support varieties without the tensor product property}

\author{Petter Andreas Bergh, Julia Yael Plavnik, Sarah Witherspoon}

\address{Petter Andreas Bergh \\ Institutt for matematiske fag \\
NTNU \\ N-7491 Trondheim \\ Norway} \email{petter.bergh@ntnu.no}
\address{Julia Yael Plavnik \\ Department of Mathematics \\ Indiana University \\ Bloomington \\ Indiana 47405 \\ USA\\ \& Fachbereich Mathematik\\ Universit\"at Hamburg\\ Hamburg  20146\\ Germany}
\email{jplavnik@iu.edu}
\address{Sarah Witherspoon \\ Department of Mathematics \\ Texas A\&M University \\ College Station \\ Texas 77843 \\ USA}
\email{sjw@math.tamu.edu}
\subjclass[2020]{16E40, 16T05, 18M05, 18M15}
\keywords{Finite tensor categories; support varieties; tensor product property}

\begin{abstract}
We show that over a perfect field, every non-semisimple finite tensor category with finitely generated cohomology embeds into a larger such category where the tensor product property does not hold for support varieties.
\end{abstract}

\maketitle

\section{Introduction}

In the two recent papers \cite{BPW1, BPW2}, we studied support varieties in the setting of finite tensor categories. When the cohomology of such a category is finitely generated---as conjectured by Etingof and Ostrik to be always true---then the varieties contain much homological information on the objects, and the theory resembles that for support varieties over group algebras and more general cocommutative Hopf algebras.

In \cite{BPW2}, we focused on the \emph{tensor product property} for support varieties. That is, given a finite tensor category $\C$ with finitely generated cohomology, we studied conditions under which the equality 
$$\VC ( X \ot Y ) = \VC (X) \cap \VC (Y)$$
holds for all objects $X,Y \in \C$. This property is crucial if one for example wants to use support varieties to classify the thick tensor ideals in the stable category. It is well known that there are non-braided finite tensor categories where the property does not hold, see for example \cite{BW, PW}. However, we showed in \cite{BPW2} that when the category is braided, the tensor product property holds for all objects if and only if it holds between indecomposable periodic objects.

In this paper, we show that when the ground field is perfect, then every non-semisimple finite tensor category $\C$ with finitely generated cohomology embeds into one such category $\D$ where the tensor product property does not hold. This is true even if the tensor product property \emph{does} hold in $\C$. The category $\D$ that we construct is a crossed product category that is not braided;
along the way we collect facts about such crossed product categories that may be of
independent interest.  It remains an open question whether the tensor product property always holds in the braided case.

\subsection*{Acknowledgments}
P.A.\ Bergh would like to thank the organizers of the Representation Theory program hosted by the Centre for Advanced Study at The Norwegian Academy of Science and Letters, where he spent parts of fall 2022. J.Y.\ Plavnik was partially supported by NSF grant DMS-2146392 and by Simons Foundation Award 889000 as part of the Simons Collaboration on Global Categorical Symmetries. J.Y.P.\ would like to thank the hospitality
and excellent working conditions at the Department of Mathematics at Universit\"at Hamburg, where she has carried out part of this research as an Experienced Fellow of the Alexander von Humboldt Foundation. S.J.\ Witherspoon was partially supported by NSF grant 2001163.

\section{Preliminaries}\label{sec:prelim}

We fix a field $k$ that is not necessarily algebraically closed, together with a finite tensor $k$-category $( \C, \ot, \unit )$ in the sense of \cite{EGNO}. This means that $\C$ is a locally finite $k$-linear abelian category, with a finite set of isomorphism classes of simple objects. Moreover, every object admits a projective cover, and hence also a minimal projective resolution. Furthermore, there is a bifunctor $\ot$ from $\C \times \C$ to $\C$, associative up to functorial isomorphisms, and called the tensor product. There is also a unit object $\unit$ with respect to the tensor product, and $( \C, \ot, \unit )$ is a monoidal category. In particular, the tensor product satisfies the so-called pentagon axiom; see \cite[Section 2.1]{EGNO}. The unit object is simple, and the monoidal structure is compatible with the abelian structure in that the tensor product is bilinear on morphisms. Finally, every object admits a left and a right dual in the sense of \cite[Section 2.10]{EGNO}, so that $\C$ is rigid as a monoidal category.

The rigidity of $\C$ has important consequences; we mention three of them. First of all, by \cite[Proposition 4.2.1]{EGNO}, the tensor product is biexact, that is, exact in each argument. Secondly, by \cite[Proposition 4.2.12]{EGNO}, the projective objects form a two-sided ideal in $\C$, so that the tensor product between a projective object and any other object is again projective. Finally, by \cite[Proposition 6.1.3]{EGNO}, the projective and the injective objects of $\C$ are the same, so that the category is actually quasi-Frobenius.

Given objects $M,N \in \C$, we denote the graded $k$-vector space $\oplus_{n=0}^{\infty} \Ext_{\C}^n (M,N)$ by $\Ext_{\C}^* (M,N)$. With the usual Yoneda product as multiplication, the space $\Ext_{\C}^* (M,M)$ becomes a graded $k$-algebra, and of particular interest is the algebra $\Ext_{\C}^* ( \unit, \unit)$. This is the \emph{cohomology ring} of $\C$, and denoted by $\Coh^* ( \C )$. By \cite[Theorem 1.7]{SA}, this is a graded-commutative $k$-algebra. Since the tensor product is exact in the first argument, the functor $- \ot M$ induces a homomorphism
$$\Coh^* ( \C ) \xrightarrow{\varphi_M} \Ext_{\C}^* (M,M)$$
of graded $k$-algebras, turning $\Ext_{\C}^* (M,M)$ into a left and a right $\Coh^* ( \C )$-module. Now since $\Ext_{\C}^* (M,N)$ is a left $\Ext_{\C}^* (N,N)$-module and a right $\Ext_{\C}^* (M,M)$-module (again using the Yoneda product), we see that it is both a left and a right module over $\Coh^* ( \C )$, via $\varphi_N$ and $\varphi_M$, respectively. However, by \cite[Lemma 2.2]{BPW2} the two module actions coincide for homogeneous elements, up to a sign. In particular, it makes no difference whether we view $\Ext_{\C}^* (M,M)$ as a left or as a right module over $\Coh^* ( \C )$.

Since the cohomology ring is graded-commutative, the graded $k$-algebra defined by
$$\Ho ( \C ) = \left \{ 
\begin{array}{ll}
\Coh^*( \C ) & \text{if the characteristic of $k$ is two,} \\
\Coh^{2*}( \C ) & \text{if not}
\end{array} 
\right.$$
is commutative in the ordinary sense. We denote by $\m_0$ the ideal $\Coh^+ ( \C )$ of this ring, that is, the ideal of $\Ho ( \C )$ generated by the homogeneous elements of positive degree. This is a maximal ideal, since $\Coh^0 ( \C ) = \Hom_{\C}( \unit, \unit )$ is a field; it is a division ring since the unit object is simple, and commutative by the above discussion. 

\begin{definition}
The \emph{support variety} of an object $M \in \C$ is
$$\VC (M) = \{ \m_0 \} \cup \{ \m \in \Maxspec \Ho ( \C ) \mid \Ker \varphi_M \subseteq \m \}$$
\end{definition}

Note that the presence of $\m_0$ in the definition of support varieties is superfluous whenever $M$ is nonzero, for then this maximal ideal automatically contains the homogeneous ideal $\Ker \varphi_M$. Without any finiteness condition on the cohomology of $\C$, these support varieties may not contain any important homological information, and so we make the following definition.

\begin{definition}
The finite tensor category $\C$ satisfies the \emph{finiteness condition} \textbf{Fg} if the cohomology ring $\Coh^*( \C )$ is finitely generated, and $\Ext_{\C}^*(M,M)$ is a finitely generated $\Coh^*( \C )$-module for every object $M \in \C$.
\end{definition}

By \cite[Remark 3.5]{BPW1}, one can replace $\Coh^*( \C )$ by $\Ho ( \C )$ in this definition; the two versions are equivalent. It was conjectured by Etingof and Ostrik in \cite{EO} that \emph{every} finite tensor category satisfies \textbf{Fg}, and this conjecture is still open. As shown in \cite{BPW1}, when this finiteness condition holds, then the theory of support varieties becomes quite powerful, as in the classical case for modules over group algebras of finite groups.

In this paper, we are concerned with the question of whether support varieties respect tensor products, in the following sense.

\begin{definition}
The finite tensor category $\C$ satisfies the \emph{tensor product property} for support varieties if $\VC ( M \ot N ) = \VC (M) \cap \VC (N)$ for all objects $M,N \in \C$.
\end{definition}

This definition makes perfect sense without assuming that $\C$ satisfies \textbf{Fg}. By \cite[Proposition 3.3(v)]{BPW1}, the inclusion $\VC ( M \ot N ) \subseteq \VC (M) \cap \VC (N)$ always holds when $\C$ is \emph{braided}, that is, when for all objects $M,N \in \C$ there are functorial isomorphisms $b_{M,N} \colon M \ot N \longrightarrow N \ot M$ that satisfy the hexagonal identities in \cite[Definition 8.1.1]{EGNO}. In \cite{BW} and \cite{PW}, examples are given of finite tensor categories where the tensor product property does not hold, in fact not even the above inclusion. These examples are then necessarily non-braided. It is an open question whether the tensor product property always holds in the braided case, or under the stronger requirement that $\C$ is \emph{symmetric}, that is, when the braiding isomorphisms satisfy $b_{N,M} \circ b_{M,N} = 1_{M \ot N}$ for all $M,N \in \C$.
Other than categories of modules of some types of Hopf algebras, 
the only case that has been completely settled is when the ground field is algebraically closed and of characteristic zero; over such a field, every symmetric finite tensor category satisfies the tensor product property, by \cite[Theorem 4.9]{BPW2}. The proof provided relies on Deligne's classification of such categories as certain skew group algebras, from \cite{Deligne2}. 

By \cite[Theorem 3.6]{BPW2}, when $\C$ is braided and satisfies \textbf{Fg}, the tensor product property holds if and only if the following holds for all $M,N \in \C$: if $\VC(M) \cap \VC(N)$ is not trivial, that is, if $\VC(M) \cap \VC(N) \neq \{ \m_0 \}$, then $M \ot N$ is not projective. Consequently, if the tensor product property does not hold, then there must exist two nonprojective objects $M,N$ whose tensor product $M \ot N$ is projective, but for which $\VC(M) \cap \VC(N)$ is not trivial. They must be nonprojective since the variety of a projective object is necessarily trivial; see the paragraph following \cite[Definition 3.1]{BPW1}. In the following result, we show that at least such a pair of objects with $M = N$ cannot exist.

\begin{proposition}\label{prop:powers}
Let $k$ be a field and $( \C, \ot, \unit )$ a braided finite tensor $k$-category. Then an object $M \in \C$ is projective if and only if the $n$-fold tensor product $M^{\ot n}$ is projective for some $n \ge 1$.
\end{proposition}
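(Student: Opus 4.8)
One direction is trivial: if $M$ is projective, then since the projective objects form a two-sided tensor ideal (by \cite[Proposition 4.2.12]{EGNO}), each tensor factor being projective forces $M^{\ot n}$ to be projective for all $n \ge 1$. So the content is the converse: if $M^{\ot n}$ is projective for some $n \ge 1$, then $M$ is projective. I would assume $\C$ satisfies \textbf{Fg} if needed, but actually the statement does not mention \textbf{Fg}, so the plan is to argue directly without it, using only rigidity, biexactness, and the braiding.

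**The main idea via duals and the braiding.** The plan is to show that $M$ is a direct summand of $M^{\ot n} \ot (\text{something})$, so that projectivity of $M^{\ot n}$ propagates to $M$ via the tensor ideal property. In a rigid monoidal category, the evaluation and coevaluation maps give that $\unit$ is a summand of $M \ot M^*$ precisely when... well, not always, but there is a cleaner route. Recall that in any braided category one has a natural isomorphism $M \ot M^* \cong M^* \ot M$ and, more importantly, the braiding allows one to build a \emph{splitting}: consider the composite
$$ M \xrightarrow{\ \text{coev} \ot \id\ } M \ot M^* \ot M \xrightarrow{\ \id \ot \text{ev}\ } M $$
which is the identity. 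This shows $M$ is a summand of $M \ot M^* \ot M$. Iterating, $M$ is a summand of $M^{\ot n} \ot (M^*)^{\ot(n-1)}$ (interleaving coevaluations). If $M^{\ot n}$ is projective, then $M^{\ot n} \ot (M^*)^{\ot(n-1)}$ is projective since the projectives form a tensor ideal, and hence its summand $M$ is projective. The braiding may not even be strictly necessary for this particular zig-zag argument—rigidity alone suffices—so I should double-check whether the braiding hypothesis is used, or whether it is included only for uniformity with the surrounding discussion; in any case it does no harm.

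**Where the subtlety lies.** The potential obstacle is verifying that the iterated zig-zag composite $M \to M^{\ot n} \ot (M^*)^{\ot(n-1)} \to M$ genuinely equals the identity on $M$. This is a standard string-diagram/coherence computation: one inserts $n-1$ nested coevaluation maps $\unit \to M \ot M^*$ and then contracts them with $n-1$ evaluation maps, and the triangle (zig-zag) identities for the duality, together with the associativity constraints and the interchange law, collapse the whole thing to $\id_M$. I would present this carefully for $n=2$ first, where it is exactly one application of the zig-zag identity, and then note the general case follows by an easy induction: writing $M^{\ot n} = M^{\ot(n-1)} \ot M$, apply the $n=2$ splitting to express $M^{\ot(n-1)}$ as a summand of $M^{\ot(n-1)} \ot M^* \ot M^{\ot(n-2) \, *}$... actually the cleanest induction is: $M$ is a summand of $M \ot M^* \ot M$; replacing the rightmost $M$ using the inductive splitting expresses $M$ as a summand of $M \ot M^* \ot (M^{\ot(n-1)} \ot (M^*)^{\ot(n-2)})$, i.e.\ a summand of $M^{\ot n} \ot (M^*)^{\ot(n-1)}$ after reassociating. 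Since reassociation is an isomorphism and projectivity is preserved under isomorphism and the tensor ideal property, the conclusion follows.

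**Summary of steps.** (1) Dispose of the easy direction via the tensor ideal property. (2) For the converse, establish the base case: $M$ is a direct summand of $M \ot M^* \ot M$ using a zig-zag identity. (3) Induct to show $M$ is a direct summand of $M^{\ot n} \ot (M^*)^{\ot(n-1)}$ for every $n \ge 1$. (4) Conclude: if $M^{\ot n}$ is projective for some $n$, then so is $M^{\ot n} \ot (M^*)^{\ot(n-1)}$ (tensor ideal), hence so is its summand $M$. I expect step (3)—pinning down the correct iterated zig-zag and confirming it is split mono/epi—to be the only place requiring genuine care, and even there it is bookkeeping rather than a real difficulty.
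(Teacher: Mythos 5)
Your overall strategy coincides with the paper's: use the rigidity zig-zag
$M \to M \ot M^* \ot M \to M$, tensor it up, and transfer projectivity of
$M^{\ot n}$ to $M$ via the tensor ideal property. The paper descends one step at
a time (shows $M^{\ot(n-1)}$ is a summand of $M^{\ot n}\ot M^*$ and iterates),
while you try to show directly that $M$ is a summand of
$M^{\ot n}\ot(M^*)^{\ot(n-1)}$; these are minor variants of the same idea.

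However, there is a genuine error in your treatment of the braiding, and it is
exactly the point where the proof could go wrong. Your inductive step produces
$M$ as a summand of
$M \ot M^* \ot M^{\ot(n-1)} \ot (M^*)^{\ot(n-2)}$,
and you claim this ``is'' $M^{\ot n}\ot(M^*)^{\ot(n-1)}$ \emph{after
reassociating}. That is false: the associativity constraints cannot move $M^*$
past the block of $M$'s. What you need is the \emph{braiding}, which supplies an
isomorphism commuting those factors. More fundamentally, without a braiding the
iterated zig-zag only ever exhibits $M$ as a summand of an object in which $M$
and $M^*$ alternate, and $M^{\ot n}$ never occurs as a \emph{contiguous} tensor
factor; since the tensor ideal property requires the projective object to appear
as such a factor, you cannot conclude anything from the projectivity of
$M^{\ot n}$. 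Your speculation that ``rigidity alone suffices'' and that the
braiding ``does no harm'' is therefore wrong: the braiding is the crucial
ingredient, and the paper uses it precisely to swap $M^*$ and $M$ so that the
middle object of the split becomes $M^{\ot n}\ot M^*$. Once you replace ``after
reassociating'' by ``after applying the braiding isomorphism,'' your argument is
correct.
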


\begin{proof}
If $M$ is projective, then so is every tensor product $M^{\ot n}$, since the projective objects form an ideal in $\C$. Conversely, suppose that $M^{\ot n}$ is projective for some $n \ge 2$. Since $\C$ is rigid, the object $M$ admits a left dual $M^*$, which implies that there exist morphisms 
$$M \longrightarrow M \ot M^* \ot M \longrightarrow M$$
whose composition equals the identity on $M$; see \cite[Definition 2.10.1]{EGNO}. Tensoring with $M^{\ot (n-2)}$, and using the fact that $\C$ is braided, we obtain morphisms
 $$M^{\ot (n-1)} \longrightarrow M^{\ot n} \ot M^* \longrightarrow M^{\ot (n-1)}$$
whose composition equals the identity on $M^{\ot (n-1)}$. This implies that $M^{\ot (n-1)}$ is a direct summand of $M^{\ot n} \ot M^*$, which is a projective object since $M^{\ot n}$ is. Consequently, the object $M^{\ot (n-1)}$ is also projective. Repeating the process, we eventually end up with $M$, which must then be projective.
\end{proof}

Let us now in the last part of this section recall a construction that will play an important role in the main result. Suppose that $( \C, \ot_{\C}, \unit_{\C} )$ and $( \D, \ot_{\D}, \unit_{\D} )$ are two finite tensor $k$-categories. Their \emph{Deligne tensor product}, denoted $\C \boxtimes \D$, is a $k$-linear abelian category that is universal with respect to right exact bifunctors on $\C \times \D$. In other words, there is a bifunctor $T \colon \C \times \D \longrightarrow \C \boxtimes \D$ of $k$-linear abelian categories, right exact in both variables, with the property that for every bifunctor $F \colon \C \times \D \longrightarrow \A$ of $k$-linear abelian categories, the following hold: if $F$ is also right exact in both variables, then there exists a unique right exact functor $F' \colon \C \boxtimes \D \longrightarrow \A$ of $k$-linear abelian categories, with the property that the diagram
$$\xymatrix{
\C \times \D \ar[rr]^T \ar[rd]^F && \C \boxtimes \D \ar[dl]_{F'} \\
& \A }$$
commutes. The Deligne tensor product was introduced in \cite{Deligne1}; it exists, is unique up to equivalence, and is again a finite tensor category. Moreover, the bifunctor $T$ is actually exact in both variables; for details, we refer to \cite[Sections 1.11 and 4.6]{EGNO}. 

Given objects $C \in \C$ and $D \in \D$, it is standard to denote the image in $\C \boxtimes \D$ of the object $(C,D) \in \C \times \D$ by $C \boxtimes D$. When we restrict the tensor product in $\C \boxtimes \D$ to such objects, we are basically using the original tensor products. Thus if $C,C' \in \C$ and $D,D' \in \D$, then
$$(C \boxtimes D) \ot (C' \boxtimes D') = (C  \ot_{\C} C') \boxtimes (D  \ot_{\D} D')$$
where $\ot$ denotes the tensor product in $\C \boxtimes \D$. The unit object in $\C \boxtimes \D$ is $ \unit_{\C} \boxtimes  \unit_{\D}$. Moreover, there is an isomorphism
$$\Hom_{\C \boxtimes \D}(C \boxtimes D, C' \boxtimes D') \simeq \Hom_{\C}(C,C') \ot_k \Hom_{\D}(D,D')$$ 
of vector spaces, and using this, one can show that
$$\Ext_{ \C \boxtimes \D }^* ( C \boxtimes D, C \boxtimes D ) \simeq \Ext_{\C}^* (C,C) \ot_k \Ext_{\D}^* (D,D)$$
as graded $k$-algebras for $C \in \C$ and $D \in \D$. In particular, there is an isomorphism
$$\Coh^*( \C \boxtimes \D ) \simeq \Coh^*( \C ) \ot_k \Coh^*( \D )$$
of cohomology rings. Therefore, if the categories $\C$ and $\D$ both satisfy \textbf{Fg}, then we see immediately that $\Coh^*( \C \boxtimes \D )$ is finitely generated, so that at least half of \textbf{Fg} also holds for $\C \boxtimes \D$. However, if the ground field $k$ is perfect, then by \cite[Lemma 5.3]{NP} the Deligne tensor product satisfies \textbf{Fg} if and only if it holds for both $\C$ and $\D$. Moreover, in this situation, the Krull dimension of $\Coh^*( \C \boxtimes \D )$ is the sum of the Krull dimensions of $\Coh^*( \C )$ and $\Coh^*( \D )$.

\section{The main result}\label{sec:main}

In this main section, we show that every finite tensor category that satisfies \textbf{Fg} embeds into a finite tensor category that also satisfies \textbf{Fg}, but for which the tensor product property does not hold. The construction of the bigger category uses the Deligne tensor product, as well as the notion of crossed product categories that we recall next. As before, we fix a field $k$ and a finite tensor $k$-category $( \C, \ot, \unit )$.

Suppose that a finite group $G$ acts on $\C$ by tensor autoequivalences. This means that there exists a monoidal functor $\Mon (G) \longrightarrow \Aut_{\otimes} ( \C )$, where $\Aut_{\otimes} ( \C )$ is the monoidal category of tensor autoequivalences on $\C$, and $\Mon (G)$ is the monoidal category whose objects are the elements of $G$, the only morphisms are the identity maps, and the monoidal product is the multiplication in $G$. For an element $\alpha \in G$, we denote by $\alpha_*$ the corresponding tensor autoequivalence on $\C$, so that the action of $\alpha$ on an object $M \in \C$ is $\alpha_*(M)$. Note that if $\beta \in G$ is another element, then by definition there is a coherent isomorphism $( \alpha \beta )_* \simeq \alpha_* \circ \beta_*$ in $\Aut _{\otimes}( \C )$.

Following \cite{Tambara} and \cite{Nikshych}, when $G$ acts on $\C$ as above, we define the \emph{crossed product category} $\C \rtimes G$ as follows. As a $k$-linear abelian category, it is $G$-graded, and equal to $\C$ in each degree. Thus the objects in $\C \rtimes G$ are of the form $\oplus_{\alpha \in G} (M_{\alpha}, \alpha )$, with $M_{\alpha}$ an object in $\C$ for each $\alpha \in G$, and a morphism from $\oplus_{\alpha \in G} (M_{\alpha}, \alpha )$ to $\oplus_{\alpha \in G} (N_{\alpha}, \alpha )$ is a sum $\oplus_{\alpha \in G} (f_{\alpha}, \alpha )$, where $f_{\alpha} \colon M_{\alpha} \longrightarrow N_{\alpha}$ is a morphism in $\C$. We define the tensor product on homogeneous objects and morphisms by
\begin{eqnarray*}
(M, \alpha ) \ot (N, \beta ) & = & (M \ot \alpha_*(N), \alpha \beta ), \\
(f, \alpha ) \ot (g, \beta ) & = & (f \ot \alpha_*(g), \alpha \beta ).
\end{eqnarray*}
In this way, the crossed product category becomes a $G$-graded finite tensor category, with unit object $( \unit, e )$, where $e$ is the identity element of $G$. The construction is in some sense a categorification of skew group algebras. Note that $\C$ embeds as a finite tensor category into $\C \rtimes G$, via the assignment $M \mapsto (M,e)$, for $M \in \C$.

As an abelian category, the crossed product category is a Deligne product. Namely, let $\vect_G$ be the category of $G$-graded finite dimensional vector spaces over $k$, and consider the functor $T \colon \C \times \vect_G \longrightarrow \C \rtimes G$ defined as follows. The image of an object $( M, V )$ is $\oplus_{\alpha \in G} (M^{\dim V_{\alpha}}, \alpha )$, where $M^n$ denotes the direct sum of $n$ copies of $M$. Given a morphism $M \longrightarrow N$ in $\C$, the image of the corresponding morphism $(M, V) \longrightarrow (N,V)$ is the obvious morphism from $\oplus_{\alpha \in G} (M^{\dim V_{\alpha}}, \alpha )$ to $\oplus_{\alpha \in G} (N^{\dim V_{\alpha}}, \alpha )$. Finally, suppose that $\psi \colon V \longrightarrow W$ is a morphism in $\vect_G$, that is, a tuple $( \psi_{\alpha} )_{\alpha \in G}$ with each $\psi_{\alpha} \colon V_{\alpha} \longrightarrow W_{\alpha}$ a linear transformation. Fixing bases for $V_{\alpha}$ and $W_{\alpha}$, we may view $\psi_{\alpha}$ as a matrix $(c_{ij})$ with each $c_{ij} \in k$, from which we obtain a corresponding morphism $M^{\dim V_{\alpha}} \longrightarrow M^{\dim W_{\alpha}}$ in $\C$ given by the matrix $(c_{ij}1_M)$. One now checks that $T$ is a well defined bifunctor of $k$-linear abelian categories, and right exact in each variable. Moreover, given any $k$-linear abelian category $\A$ together with a $k$-linear bifunctor $F \colon \C \times \vect_G \longrightarrow \A$ which is right exact in each variable, we can construct a right exact functor $F' \colon \C \rtimes G \longrightarrow \A$ as follows. Given an object $\oplus_{\alpha \in G} (M_{\alpha}, \alpha )$ in $\C \rtimes G$, let $V$ be the $G$-graded vector space which is just $k$ in each degree, and define
$$F' \left ( \oplus_{\alpha \in G} (M_{\alpha}, \alpha ) \right ) = F \left ( \oplus_{\alpha \in G} M_{\alpha}, V \right ).$$
A morphism 
$$\oplus_{\alpha \in G} (f_{\alpha}, \alpha ) \colon \oplus_{\alpha \in G} (M_{\alpha}, \alpha ) \longrightarrow \oplus_{\alpha \in G} (N_{\alpha}, \alpha )$$
in $\C \rtimes G$ induces a morphism between $( \oplus_{\alpha \in G} M_{\alpha}, V )$ and $( \oplus_{\alpha \in G} N_{\alpha}, V )$ in $\C \times \vect_G$, and we define $F' ( \oplus_{\alpha \in G} (f_{\alpha}, \alpha ) )$ to be the image under $F$ of the latter. One now checks that $F'$ is a well defined functor of $k$-linear abelian categories, and that the diagram
$$\xymatrix{
\C \times \vect_G \ar[rr]^T \ar[rd]^F && \C \rtimes G \ar[dl]_{F'} \\
& \A }$$
commutes. Furthermore, one checks that $F'$ is unique with this property. This shows that $\C \rtimes G$ is the Deligne product $\C \boxtimes \vect_G$ as an abelian category but \emph{not} as a finite tensor category when we view $\vect_G$ as a fusion category. After all, the monoidal structure in $\C \boxtimes \vect_G$ does not use the categorical $G$-action on $\C$.

Since $\C \rtimes G = \C \boxtimes \vect_G$ as a $k$-linear abelian category, the cohomology ring $\Coh^*( \C \rtimes G )$ is isomorphic to the tensor product $\Coh^*( \C ) \ot_k \Coh^*( \vect_G )$; this does not use the monoidal structures in the categories involved. Now as $\vect_G$ is a fusion category, its cohomology ring is trivial, and so $\Coh^*( \C \rtimes G ) \simeq \Coh^*( \C )$. Consequently, when \textbf{Fg} holds for either $\C$ or $\C \rtimes G$, then at least the cohomology ring of the other category is finitely generated. However, the following lemma shows that \textbf{Fg} holds for one of the categories if and only if it holds for the other. Moreover, the support varieties for the objects of $\C \rtimes G$ are just unions of support varieties over $\C$.

\begin{lemma}\label{lem:crossedproductfg}
Let $k$ be a field, $( \C, \ot, \unit )$ a finite tensor $k$-category with a categorical action from a finite group $G$, and $\C \rtimes G$ the corresponding crossed product category. Then the following hold.

\emph{(1)} There is an isomorphism $\Coh^*( \C \rtimes G ) \simeq \Coh^*( \C )$ of cohomology rings.

\emph{(2)} $\C$ satisfies \emph{\textbf{Fg}} if and only if $\C \rtimes G$ does.

\emph{(3)} If $\oplus_{\alpha \in G} (M_{\alpha}, \alpha )$ is an object in $\C \rtimes G$, then 
$$V_{\C \rtimes G} \left ( \oplus_{\alpha \in G} (M_{\alpha}, \alpha ) \right ) = \bigcup_{\alpha \in G} \VC ( M_{\alpha} )$$
when we use the isomorphism from \emph{(1)} to replace $\Ho ( \C \rtimes G )$ by $\Ho ( \C )$.
\end{lemma}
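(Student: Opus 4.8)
The plan is to establish the three statements in order, using the already-developed fact that $\C \rtimes G = \C \boxtimes \vect_G$ as $k$-linear abelian categories, so that $\Ext$-groups between homogeneous objects of the form $(C,e)$ are computed in $\C$.

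For (1), I would argue that since $\vect_G$ is a fusion category it is semisimple, so its cohomology ring is trivial ($\Coh^*(\vect_G) = k$ concentrated in degree zero). Applying the isomorphism $\Coh^*(\C \boxtimes \vect_G) \simeq \Coh^*(\C) \ot_k \Coh^*(\vect_G)$ recalled in Section~\ref{sec:prelim}, we get $\Coh^*(\C \rtimes G) \simeq \Coh^*(\C) \ot_k k \simeq \Coh^*(\C)$; I would also want to check this is a ring isomorphism, which follows since the unit object of $\C \rtimes G$ is $(\unit, e)$, which $T$ sends to $\unit_\C \boxtimes \unit_{\vect_G}$, matching the unit on the Deligne side. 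One subtlety: the monoidal structures on the two sides differ, but the cohomology ring only depends on $\Ext^*$ of the unit object with itself, hence only on the abelian structure and the choice of unit, so the Deligne-product computation applies verbatim.

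For (2) and (3), the key observation is that for a homogeneous object $(M_\alpha, \alpha)$ one has a tensor-autoequivalence identification: tensoring on the left by $(\unit,\alpha^{-1})$ (or an appropriate invertible object of $\C\rtimes G$) sends $(M_\alpha,\alpha)$ to something supported in degree $e$, and since such a functor is a tensor autoequivalence of $\C\rtimes G$ it preserves projectivity, $\Ext$-groups as $\Coh^*$-modules (up to the automorphism $\alpha_*$ induces on $\Coh^*(\C)$), and hence support varieties. More concretely, $\Ext^*_{\C\rtimes G}((M_\alpha,\alpha),(M_\alpha,\alpha)) \simeq \Ext^*_\C(M_\alpha, M_\alpha)$ as graded vector spaces (since Hom's in each graded piece are just Hom's in $\C$, and a projective resolution of $(M_\alpha,\alpha)$ is obtained by applying $-\ot(\unit,\alpha)$ to one of $(M_\alpha,e)=$ the image of $M_\alpha$), and I would check this identification is compatible with the $\Coh^*(\C)$-module structures via $\varphi$, possibly up to twisting by the automorphism of $\Coh^*(\C)$ coming from $\alpha_*$ acting on $\Ext^*_\C(\unit,\unit)$. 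A general object $\oplus_{\alpha} (M_\alpha,\alpha)$ is the direct sum of its homogeneous components, and $\Ext^*$ of a direct sum is the direct sum of the $\Ext^*$'s of the summands (with cross terms $\Ext^*_{\C\rtimes G}((M_\alpha,\alpha),(M_\beta,\beta))$ vanishing for $\alpha\neq\beta$, since these reduce to $\Hom$/$\Ext$ computations in $\C$ that land in the wrong graded degree). Then: $\oplus_\alpha(M_\alpha,\alpha)$ is projective in $\C\rtimes G$ iff each $(M_\alpha,\alpha)$ is, iff each $M_\alpha$ is projective in $\C$ (using that $-\ot(\unit,\alpha)$ is an exact autoequivalence carrying projectives to projectives both ways); finitely-generatedness of $\Ext^*_{\C\rtimes G}(X,X)$ over $\Coh^*(\C\rtimes G)\simeq\Coh^*(\C)$ follows summand-by-summand from that of $\Ext^*_\C(M_\alpha,M_\alpha)$, noting the twist by $\alpha_*$ is an automorphism so does not affect finite generation; this gives (2). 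For (3), $\Ker\varphi_{\oplus(M_\alpha,\alpha)} = \bigcap_\alpha \Ker\varphi_{(M_\alpha,\alpha)}$, and $\Ker\varphi_{(M_\alpha,\alpha)}$ equals $\alpha_*(\Ker\varphi_{M_\alpha})$ under the identification of $\Coh^*(\C\rtimes G)$ with $\Coh^*(\C)$; since the induced map on $\Maxspec\Ho(\C)$ from $\alpha_*$ is a homeomorphism and — crucially — the categorical action being by tensor \emph{autoequivalences} means $\alpha_*$ acts trivially on $\Coh^*(\C)$ up to... here I would need to check whether the twist actually disappears, e.g.\ because $\Coh^*$ is canonically attached to the unit object which every tensor autoequivalence fixes coherently; if it does, then $V_{\C\rtimes G}((M_\alpha,\alpha)) = \VC(M_\alpha)$ and taking the union over $\alpha$ (since the variety of a direct sum is the union of the varieties) yields the claim.

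The main obstacle I anticipate is precisely pinning down the role of the automorphism that $\alpha_*$ induces on $\Coh^*(\C)$: one must verify either that it is the identity (because a tensor autoequivalence comes with a coherent isomorphism $\alpha_*(\unit)\simeq\unit$ and hence acts as the identity on $\Ext^*_\C(\unit,\unit)=\Coh^*(\C)$, which I believe is the case and is the clean way to finish), or, failing that, that it is an automorphism of $\Ho(\C)$ inducing the identity on $\Maxspec$ after the canonical identification — either way the union formula in (3) is unaffected, but the argument has to address it explicitly rather than sweep it aside. A secondary, more routine obstacle is the bookkeeping needed to show the cross $\Ext$-terms between differently-graded homogeneous objects vanish and that a minimal projective resolution of $(M_\alpha,\alpha)$ really is the $\alpha$-translate of one for $(M_\alpha,e)$; this is straightforward from the definition of the tensor product on $\C\rtimes G$ but should be spelled out.
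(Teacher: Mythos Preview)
Your overall strategy matches the paper's: use that morphisms in $\C\rtimes G$ respect the $G$-grading, so projective resolutions and $\Ext$ split as direct sums over $G$, giving the key isomorphism
\[
\Ext_{\C\rtimes G}^*\!\left(\bigoplus_{\alpha}(M_\alpha,\alpha),\bigoplus_{\alpha}(N_\alpha,\alpha)\right)\;\simeq\;\bigoplus_{\alpha}\Ext_{\C}^*(M_\alpha,N_\alpha).
\]
Your argument for (1) via $\C\boxtimes\vect_G$ is exactly the one the paper records just before the lemma, and your (2) is fine.

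The gap is in (3). You route the computation through the autoequivalence ``left-tensor by $(\unit,\alpha^{-1})$'', and this is what generates the phantom twist: $(\unit,\alpha^{-1})\ot(M,\alpha)=\big((\alpha^{-1})_*(M),e\big)$, so of course $\alpha_*$ shows up. But that is not how $\varphi_{(M,\alpha)}$ is defined. The map $\varphi_{(M,\alpha)}$ is $-\ot(M,\alpha)$ applied to classes in $\Ext^*_{\C\rtimes G}\big((\unit,e),(\unit,e)\big)$, and the tensor product formula gives $(f,e)\ot(1_M,\alpha)=(f\ot e_*(1_M),\alpha)=(f\ot 1_M,\alpha)$. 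So under the identifications the composite
\[
\Coh^*(\C)\;\simeq\;\Coh^*(\C\rtimes G)\;\xrightarrow{\varphi_{(M,\alpha)}}\;\Ext^*_{\C\rtimes G}\big((M,\alpha),(M,\alpha)\big)\;\simeq\;\Ext^*_{\C}(M,M)
\]
is literally $\varphi_M$, with no twist at all. This is exactly what the paper checks, in one line, and it immediately gives $V_{\C\rtimes G}\big((M,\alpha)\big)=V_\C(M)$; then the union formula follows from additivity of varieties over direct sums.

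Your proposed fix for the twist is where the real danger lies: it is \emph{not} true in general that a tensor autoequivalence acts trivially on $\Coh^*(\C)$. Having a coherent isomorphism $\alpha_*(\unit)\simeq\unit$ only tells you that $\alpha_*$ induces a well-defined ring \emph{automorphism} of $\Ext^*_\C(\unit,\unit)$, not that this automorphism is the identity (think of an outer automorphism of a finite group acting on $\Rep(G)$ and hence on $H^*(G,k)$). Likewise there is no reason such an automorphism should act trivially on $\Maxspec\Ho(\C)$. So had the twist been genuine, neither of your escape hatches would close the argument. The fix is simply to compute $\varphi_{(M,\alpha)}$ directly as above and observe that the grading of the \emph{left} factor is $e$, so only $e_*=\id$ ever appears.
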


\begin{proof}
We saw an argument for (1) above, but we now give an elementary argument for both (1) and (2). Namely, since the morphisms in $\C \rtimes G$ respect the $G$-grading, the cohomology of $\C \rtimes G$ takes place in each individual degree. The projective objects are of the form $\oplus_{\alpha \in G} (P_{\alpha}, \alpha )$, with each $P_{\alpha}$ projective in $\C$, and a (minimal) projective resolution of an object $\oplus_{\alpha \in G} (M_{\alpha}, \alpha )$ is of the form $\oplus_{\alpha \in G} ( \mathbf{P}_{\alpha}, \alpha )$, with each $\mathbf{P}_{\alpha}$ a (minimal) projective resolution of $M_{\alpha}$. Therefore, given another object $\oplus_{\alpha \in G} (N_{\alpha}, \alpha )$, there is a natural isomorphism
\begin{equation*}\label{eq:iso}
\Ext_{\C \rtimes G}^* \left ( \oplus_{\alpha \in G} (M_{\alpha}, \alpha ), \oplus_{\alpha \in G} (N_{\alpha}, \alpha ) \right ) \simeq \bigoplus_{\alpha \in G} \Ext_{\C}^* \left ( M_{\alpha}, N_{\alpha} \right ) \tag{$\dagger$},
\end{equation*}
which is an isomorphism of rings when $\oplus_{\alpha \in G} (M_{\alpha}, \alpha ) = \oplus_{\alpha \in G} (N_{\alpha}, \alpha )$. Note that since the unit object in $\C \rtimes G$ is $( \unit, e)$, it follows immediately that $\Coh^*( \C \rtimes G ) \simeq \Coh^*( \C )$, proving (1).

Suppose that $\C$ satisfies \textbf{Fg}. Then by (1) the cohomology ring $\Coh^*( \C \rtimes G )$ is finitely generated. If $X = \oplus_{\alpha \in G} (M_{\alpha}, \alpha )$ is an object of $\C \rtimes G$, then using the above isomorphism (\ref{eq:iso}), we see that the cohomology ring $\Coh^*( \C \rtimes G )$ acts on $\Ext_{\C \rtimes G}^*(X,X)$ in a way that respects the $G$-grading. That is, the action is induced by the action of $\Coh^*( \C)$ on each $\Ext_{\C}^* ( M_{\alpha}, M_{\alpha})$. Since the latter is a finitely generated $\Coh^*( \C)$-module for each $\alpha \in G$, we see that $\Ext_{\C \rtimes G}^*(X,X)$ is finitely generated as a module over $\Coh^*( \C \rtimes G )$, and so $\C \rtimes G$ satisfies \textbf{Fg}. Conversely, if the crossed product category satisfies \textbf{Fg}, then $\Coh^*( \C )$ is finitely generated by (1) again. Moreover, if $M$ is an object of $\C$, then $\Ext_{\C \rtimes G}^*( (M,e),(M,e))$ is a finitely generated $\Coh^*( \C \rtimes G )$-module. Using the isomorphism (\ref{eq:iso}), we then see that $\Ext_{\C}^* ( M, M)$ is finitely generated as a module over $\Coh^*( \C )$, so that $\C$ satisfies \textbf{Fg}. This proves (2).

For (3), we use again that the cohomology of $\C \rtimes G$ respects the $G$-grading. Given an object $(M, \alpha ) \in \C \rtimes G$ concentrated in degree $\alpha$, consider the composition
$$\Coh^*( \C ) \longrightarrow \Coh^*( \C \rtimes G ) \xrightarrow{- \ot (M, \alpha )} \Ext_{\C \rtimes G}^*( (M, \alpha ),(M, \alpha )) \longrightarrow \Ext_{\C}^* ( M, M)$$
of graded ring homomorphisms, where the outer ones are the isomorphisms from (\ref{eq:iso}). The composition equals $\varphi_M$, that is, the homomorphism $- \ot M$. Thus when we compute support varieties by using $\Ho ( \C )$, we see that $V_{\C \rtimes G} ( (M, \alpha )) = \VC (M)$. For an arbitrary object $\oplus_{\alpha \in G} (M_{\alpha}, \alpha )$ of $\C \rtimes G$ ,we then see that
$$V_{\C \rtimes G} \left ( \oplus_{\alpha \in G} (M_{\alpha}, \alpha ) \right ) = \bigcup_{\alpha \in G} V_{\C \rtimes G} \left ( (M, \alpha ) \right ) = \bigcup_{\alpha \in G} \VC ( M_{\alpha} ),$$
since support varieties respect direct sums by \cite[Proposition 3.3(i)]{BPW1}.
\end{proof}

The group $G$ acts on the crossed product category $\C \rtimes G$ by tensor autoequivalences in a natural way. Namely, for an element $\alpha \in G$, the action on objects and morphisms in $\C \rtimes G$ is given by
\begin{eqnarray*}
  \alpha_* \left ( \oplus_{\beta \in G} (M_{\beta}, \beta ) \right ) & = & \oplus_{\beta \in G} \left ( \alpha_*(M_{\beta}) , \alpha\beta\alpha^{-1} \right ) ,\\
   \alpha_* \left ( \oplus_{\beta \in G} (f_{\beta}, \beta ) \right ) & = & \oplus_{\beta \in G} \left ( \alpha_*(f_{\beta}) , \alpha\beta\alpha^{-1} \right ),
\end{eqnarray*}
where we have used the notation $\alpha_*$ to denote the tensor autoequivalences on both $\C$ and $\C \rtimes G$. The following result shows that when the tensor product property holds for $\C$, then a twisted version holds for the crossed product category.

\begin{proposition}
Let $k$ be a field, and $(\C, \ot, \unit )$ a non-semisimple finite tensor $k$-category that satisfies the tensor product property for support varieties. Furthermore, let $G$ be  finite group acting on $\C$ by tensor autoequivalences. Then for any objects $(M,\alpha)$ and $(N,\beta)$ of $\C\rtimes G$, concentrated in degrees $\alpha$ and $\beta$, the following holds:
$$V_{\C \rtimes G} \left ( (M,\alpha) \ot (N,\beta) \right ) = V_{\C\rtimes G}	\left ( (M,\alpha) \right ) \cap V_{\C\rtimes G} \left ( \alpha_*(N, \beta ) \right )$$
  \end{proposition}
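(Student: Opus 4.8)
The plan is to reduce everything to the tensor product property in $\C$ by using the description of the tensor product in $\C \rtimes G$ together with part (3) of Lemma~\ref{lem:crossedproductfg}. First I would unwind the left-hand side: by definition of the tensor product in the crossed product category, $(M,\alpha) \ot (N,\beta) = (M \ot \alpha_*(N), \alpha\beta)$ is concentrated in the single degree $\alpha\beta$, so by Lemma~\ref{lem:crossedproductfg}(3) we have
$$V_{\C \rtimes G}\left( (M,\alpha) \ot (N,\beta) \right) = \VC\left( M \ot \alpha_*(N) \right).$$
Since $\C$ satisfies the tensor product property (here one should check $\C$ also satisfies \textbf{Fg}, or at least note that the statement as phrased presupposes the relevant finiteness; in any case the tensor product property as defined is the equality of varieties), this equals $\VC(M) \cap \VC(\alpha_*(N))$.

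Next I would compute the right-hand side the same way. The object $(M,\alpha)$ is concentrated in degree $\alpha$, so $V_{\C\rtimes G}((M,\alpha)) = \VC(M)$ by Lemma~\ref{lem:crossedproductfg}(3). For the second factor, $\alpha_*(N,\beta) = (\alpha_*(N), \alpha\beta\alpha^{-1})$ by the definition of the $G$-action on $\C \rtimes G$, which is again concentrated in a single degree, so $V_{\C\rtimes G}(\alpha_*(N,\beta)) = \VC(\alpha_*(N))$. Hence the right-hand side is $\VC(M) \cap \VC(\alpha_*(N))$, matching the left-hand side, and the proof is complete.

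The only genuine content beyond bookkeeping is the observation that $\alpha_*$ is a tensor autoequivalence of $\C$, so it does not matter that it appears; all we used is $V_{\C\rtimes G}$ of a homogeneous object equals $\VC$ of its component, which is exactly Lemma~\ref{lem:crossedproductfg}(3). I expect the main (very minor) obstacle to be a hygiene point: one must make sure $\C$ satisfying the tensor product property is being used in the form ``$\VC(M \ot L) = \VC(M) \cap \VC(L)$ for all objects $M, L$,'' applied with $L = \alpha_*(N)$, and to note that no additional hypothesis on $G$ or on how it acts is needed — the twisting by $\alpha_*$ on the right-hand side is precisely what makes the degrees bookkeep correctly, since $(M,\alpha) \ot (N,\beta)$ lands in degree $\alpha\beta$ whereas $(M,\alpha) \ot \alpha_*(N,\beta)$ would land in degree $\alpha \cdot \alpha\beta\alpha^{-1} = \alpha^2\beta\alpha^{-1}$, but at the level of support varieties only the underlying $\C$-objects matter and the degrees are irrelevant. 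So the non-semisimplicity hypothesis is not really needed for this particular equality either; it is presumably stated because it is a standing assumption for the surrounding results.
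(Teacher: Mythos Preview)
Your proof is correct and follows essentially the same route as the paper: unwind the tensor product in $\C\rtimes G$, apply Lemma~\ref{lem:crossedproductfg}(3) to pass to $\VC$, use the tensor product property in $\C$, and then reinterpret both factors via Lemma~\ref{lem:crossedproductfg}(3) and the definition of the $G$-action on $\C\rtimes G$. Your side remarks about the non-semisimplicity hypothesis and \textbf{Fg} being inessential for this particular computation are also accurate.
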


\begin{proof}
By the definition of the tensor product in $\C\rtimes G$ and Lemma~\ref{lem:crossedproductfg}(3), we have 
\begin{eqnarray*}
V_{\C \rtimes G} \left ( (M,\alpha) \ot (N,\beta) \right ) & = & V_{\C\rtimes G} \left ( M \ot \alpha_*(N), \alpha \beta \right ) \\
&=& V_{\C} \left (M \ot \alpha_*(N) \right ) \\
&=& V_{\C} (M) \cap V_{\C} (\alpha_*(N))\\
& = & V_{\C\rtimes G} \left ( (M,\alpha) \right ) \cap V_{\C\rtimes G} \left ( \alpha_*(N), \alpha \beta \alpha^{-1} \right ) \\
& = & V_{\C\rtimes G}	\left ( (M,\alpha) \right ) \cap V_{\C\rtimes G} \left ( \alpha_*(N, \beta ) \right )
\end{eqnarray*}
\end{proof}  

In general, it is not always the case that $V_{\C\rtimes G}(\alpha_*(N, \beta ))$ is equal to $V_{\C\rtimes G} (N,\beta)$, or equivalently (by Lemma \ref{lem:crossedproductfg}(3)), that $V_{\C}(N)$ is equal to $V_{\C}( \alpha_*(N))$. Therefore, the above proposition may be used to construct examples where the tensor product property does not hold. However, it turns out that it is in fact not necessary to assume that the tensor product property holds for $\C$ to construct such examples. Inspired by the twisted version of the tensor product property given in the proposition, we formalize such a class of examples in a larger context next. Specifically, we will combine the Deligne tensor product with a crossed product of a specific kind. As we shall see, when the finite tensor category $\C$ that we start with is not semisimple (that is, not a fusion category), then the finite tensor category that we construct turns out not to satisfy the tensor product property. 

Let $C_2 = \{ e, \alpha \}$ be the multiplicative group with two elements, where $e$ is the identity. Consider the twisting map $\tau \colon \C \times \C \longrightarrow \C \times \C$ given by interchanging the factors, that is, mapping an object $(M,N)$ to $(N,M)$, and similarly for morphisms. This is a bilinear functor, and exact in each variable. Composing with the biexact structure bifunctor $T \colon \C \times \C \to \C \boxtimes \C$, we use the universal property of the Deligne tensor product to obtain a unique right exact functor $\alpha_* \colon \C \boxtimes \C \longrightarrow \C \boxtimes \C$ making the diagram
$$\xymatrix{
\C \times \C \ar[rr]^{T} \ar[rd]^{T \circ \tau} && \C \boxtimes \C \ar[dl]_{\alpha_*} \\
&  \C \boxtimes \C }$$
commute. The functors $T$ and $\tau$ are monoidal, hence so is $\alpha_*$, making it a functor of finite tensor categories. Moreover, from the diagram above we obtain
$$\alpha_* \circ \alpha_* \circ T = \alpha_* \circ T \circ \tau = T \circ \tau \circ \tau = T$$
and from the universal property of $T$ we may conclude that $\alpha_* \circ \alpha_*$ is the identity. Thus $\alpha_*$ is an autoequivalence of order two, and there is a monoidal functor 
$$\Mon ( C_2 ) \longrightarrow \Aut_{\otimes} (  \C \boxtimes \C )$$ 
mapping $\alpha$ to $\alpha_*$. We shall say that $C_2$ acts on $\C \boxtimes \C$ by interchanging factors, since for objects $C,C' \in \C$ there is an equality
$$\alpha_* ( C \boxtimes C' ) = \alpha_* \circ T \left ( (C,C' ) \right ) =  T \left ( (C',C ) \right ) = C' \boxtimes C$$

We may now form the crossed product category $( \C \boxtimes \C )  \rtimes C_2$. When the ground field $k$ is perfect and $\C$ satisfies \textbf{Fg}, then as mentioned in Section \ref{sec:prelim}, the Deligne tensor product $\C \boxtimes \C$ also satisfies \textbf{Fg}, by \cite[Lemma 5.3]{NP}. Then in turn so does $( \C \boxtimes \C )  \rtimes C_2$, by Lemma \ref{lem:crossedproductfg}(2). The following theorem, our main result, shows that if $\C$ is not a fusion category, that is, not semisimple, then $( \C \boxtimes \C )  \rtimes C_2$ does not satisfy the tensor product property for support varieties.

\begin{theorem}\label{thm:main}
Let $k$ be a perfect field and $( \C, \ot, \unit )$ a non-semisimple finite tensor $k$-category that satisfies \emph{\textbf{Fg}}. Furthermore, let $C_2$ be the multiplicative group of order two, acting on $\C \boxtimes \C$ by interchanging factors. Then the finite tensor $k$-category $( \C \boxtimes \C )  \rtimes C_2$ satisfies \emph{\textbf{Fg}}, but not the tensor product property for support varieties.
\end{theorem}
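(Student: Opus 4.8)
The plan is to exhibit, inside $\D := ( \C \boxtimes \C ) \rtimes C_2$, a single non-projective object $X$ whose square $X \ot X$ is projective. Granting this, the tensor product property fails immediately: since $X \ot X$ is projective, $V_{\D}( X \ot X ) = \{ \m_0 \}$, whereas $X$ is non-projective and $\D$ satisfies \textbf{Fg} (by \cite[Lemma 5.3]{NP} and Lemma~\ref{lem:crossedproductfg}(2), as noted just before the theorem), so $V_{\D}(X) \neq \{ \m_0 \}$; hence
$$V_{\D}( X \ot X ) = \{ \m_0 \} \subsetneq V_{\D}(X) = V_{\D}(X) \cap V_{\D}(X) .$$
Note that this is exactly the behaviour that Proposition~\ref{prop:powers} rules out when the category is braided, so the example also shows that the braiding hypothesis there cannot be dropped.

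To produce $X$, fix a non-zero projective object $P$ of $\C$ --- for instance a projective cover of $\unit$ --- and let $X = ( \unit_{\C} \boxtimes P , \alpha )$ be the object of $\D$ concentrated in the non-identity degree $\alpha \in C_2$ whose underlying object of $\C \boxtimes \C$ is $\unit_{\C} \boxtimes P$. Since $\alpha$ acts on $\C \boxtimes \C$ by interchanging tensor factors we have $\alpha_*( \unit_{\C} \boxtimes P ) = P \boxtimes \unit_{\C}$, and so the tensor product formula in a crossed product category gives (the grading in the middle term being $\alpha \cdot \alpha = e$)
$$X \ot X = \bigl( ( \unit_{\C} \boxtimes P ) \ot \alpha_*( \unit_{\C} \boxtimes P ) , e \bigr) = \bigl( ( \unit_{\C} \boxtimes P ) \ot ( P \boxtimes \unit_{\C} ) , e \bigr) = ( P \boxtimes P , e ) .$$
As $P$ is projective in $\C$, the object $P \boxtimes P$ is projective in $\C \boxtimes \C$, and therefore $( P \boxtimes P , e )$ is projective in $\D$ --- the projective objects of a crossed product category being precisely those whose homogeneous components are projective (see the proof of Lemma~\ref{lem:crossedproductfg}). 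Hence $X \ot X$ is projective and $V_{\D}( X \ot X ) = \{ \m_0 \}$.

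It remains to check that $X$ itself is \emph{not} projective; this is the only place where the non-semisimplicity of $\C$ is used, and is the step I expect to require the most care. Since projectivity in $\D$ can be checked degreewise, it suffices to show that $\unit_{\C} \boxtimes P$ is not projective in $\C \boxtimes \C$. Now an object $A \boxtimes B$ of $\C \boxtimes \C$ is projective if and only if both $A$ and $B$ are: one implication is standard, and for the other, the isomorphism $\Ext_{\C \boxtimes \C}^{*}( A \boxtimes B , A \boxtimes B ) \simeq \Ext_{\C}^{*}(A,A) \ot_k \Ext_{\C}^{*}(B,B)$ shows that if $A \boxtimes B$ is projective and $B \neq 0$ then $\Ext_{\C}^{> 0}(A,A) = 0$, and an object with vanishing positive self-extensions has trivial support variety and is therefore projective by \textbf{Fg}. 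Since $\C$ is not semisimple, $\unit_{\C}$ is not projective in $\C$, while $P \neq 0$; hence $\unit_{\C} \boxtimes P$ is not projective in $\C \boxtimes \C$, so $X$ is not projective in $\D$, and thus $V_{\D}(X) \neq \{ \m_0 \}$ by \textbf{Fg}. Combined with the previous paragraph, this shows that $( \C \boxtimes \C ) \rtimes C_2$ does not satisfy the tensor product property, completing the proof.
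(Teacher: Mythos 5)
Your proof is correct and takes essentially the same approach as the paper: exhibit an object $X$ of $\D$ concentrated in degree $\alpha$ of the form (projective) $\boxtimes$ (nonprojective), so that $X$ is not projective but $X \ot X$, having both box-factors a tensor product of a projective with something, is projective. The only cosmetic difference is the choice of $X$ --- you take $( \unit_{\C} \boxtimes P, \alpha )$ with $P$ a nonzero projective, while the paper takes $( P \boxtimes M, \alpha )$ with $M$ nonprojective and $P$ a projective cover of $M$; the nonprojectivity check and the appeal to {\bf Fg} via vanishing of positive-degree self-extensions are the same in both.
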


\begin{proof}
For simplicity, we denote the crossed product category $( \C \boxtimes \C )  \rtimes C_2$ by $\D$. In the course of the proof, we shall be using the tensor products in all the three categories $\C$, $\C \boxtimes \C$, and $\D$. To distinguish them, we therefore denote them by $\ot$, $\ot_1$, and $\ot_2$, respectively.

We saw in the paragraph preceding the theorem that $\D$ satisfies \textbf{Fg}. Now, since $\C$ is not semisimple, we may choose a nonprojective object $M \in \C$, for example the unit object; if $\unit$ were projective, then so would be every object $N \in \C$, since $N \simeq N \ot \unit$ and the projectives form an ideal. Choose a projective object $P \in \C$ for which there exists an epimorphism $P \longrightarrow M$; there exists such an object since $\C$ has enough projectives. Note that $P$ is nonzero since $M$ is not projective. Let us denote the object $( P \boxtimes M, \alpha )$ of $\D$ by just $X$, where $\alpha$ is the element of $C_2$ of order two. We shall show that
$$V_{\D} ( X \ot_2 X ) \neq V_{\D} ( X )$$
and consequently that the tensor product property for support varieties in $\D$ does not hold, since trivially $V_{\D} ( X ) \cap V_{\D} ( X ) = V_{\D} ( X )$.

By \cite[Corollary 4.2]{BPW1}, since $\C$ satisfies \textbf{Fg} and $M$ is not projective, the support variety $V_{\C}(M)$ is not trivial. Then by \cite[Proposition 6.2]{BPW1}, the $k$-vector space $\Ext_{\C}^n(M,M)$ is nonzero for infinitely many $n \ge 1$. Consider now the object $P \boxtimes M$ of $\C \boxtimes \C$. At the end of Section \ref{sec:prelim}, we saw that there is an isomorphism
$$\Ext_{ \C \boxtimes \C }^* ( P \boxtimes M, P \boxtimes M ) \simeq \Ext_{\C}^* (P,P) \ot_k \Ext_{\C}^* (M,M)$$
of $k$-vector spaces, and so since $P$ is nonzero we see that $\Ext_{ \C \boxtimes \C }^n( P \boxtimes M, P \boxtimes M )$ must be nonzero for infinitely many $n \ge 1$. The Deligne product $\C \boxtimes \C$ satisfies \textbf{Fg} (again from the paragraph preceding the theorem), hence by using \cite[Proposition 6.2 and Corollary 4.2]{BPW1} again we see that $P \boxtimes M$ is not projective in $\C \boxtimes \C$. This implies that $X = ( P \boxtimes M, \alpha )$ is not projective in $\D$, as explained in the proof of Lemma \ref{lem:crossedproductfg}. Consequently, the support variety $V_{\D}(X)$ is not trivial, again by \cite[Corollary 4.2]{BPW1}.

Now consider the object $X \ot_2 X$. By definition of the tensor product in $\D$, we obtain
\begin{eqnarray*}
X \ot_2 X & = & \left ( ( P \boxtimes M ) \ot_1 \alpha_* ( P \boxtimes M ), \alpha^2 \right ) \\
& = & \left ( ( P \boxtimes M ) \ot_1 ( M \boxtimes P ), e \right ) \\
& = & \left ( (P \ot M) \boxtimes (M \ot P), e \right )
\end{eqnarray*}
where $e$ is the identity element of $C_2$. Let us denote the objects $P \ot M$ and $M \ot P$ in $\C$ by $Q_1$ and $Q_2$, respectively; these are both projective, since the projective objects form an ideal. As in the previous paragraph, there is an isomorphism
$$\Ext_{ \C \boxtimes \C }^* ( Q_1 \boxtimes Q_2, Q_1 \boxtimes Q_2 ) \simeq \Ext_{\C}^* (Q_1,Q_1) \ot_k \Ext_{\C}^* (Q_2,Q_2)$$
of $k$-vector spaces, and so since $Q_1$ and $Q_2$ are projective in $\C$, we conclude this time that $\Ext_{ \C \boxtimes \C }^* ( Q_1 \boxtimes Q_2, Q_1 \boxtimes Q_2 ) = 0$ for all $n \ge 1$. Therefore, by \cite[Proposition 6.2 and Corollary 4.2]{BPW1}, the object $Q_1 \boxtimes Q_2$ is projective in $\C \boxtimes \C$. Again, as explained in the proof of Lemma \ref{lem:crossedproductfg}, we now see that $X \ot_2 X = ( Q_1 \boxtimes Q_2, e )$ is projective in $\D$, hence the support variety $V_{\D}( X \ot_2 X)$ is trivial. This shows that $V_{\D} ( X \ot_2 X ) \neq V_{\D} ( X )$.
\end{proof}

In general, each factor in a Deligne tensor product embeds into it, with a structure preserving functor. Thus if $\C$ and $\D$ are finite tensor categories, then $\C$ embeds (as a finite tensor category) into $\C \boxtimes \D$ via $C \mapsto C \boxtimes \unit_{\D}$, and similarly for morphisms. Using this, we see that $\C$ embeds as a finite tensor category into $( \C \boxtimes \C )  \rtimes C_2$ via $C \mapsto (C \boxtimes \unit, e)$. Consequently, Theorem \ref{thm:main} shows that over a perfect field, \emph{any} finite tensor category that satisfies \textbf{Fg} embeds into one that also satisfies \textbf{Fg}, but not the tensor product property for support varieties---even when the tensor product property \emph{does} hold for the original category.

\begin{corollary}\label{cor:main}
Let $k$ be a perfect field and $( \C, \ot_{\C}, \unit_{\C} )$ a non-semisimple finite tensor $k$-category that satisfies \emph{\textbf{Fg}}. Then $( \C, \ot_{\C}, \unit_{\C} )$ embeds as a finite tensor category into a finite tensor $k$-category $( \D, \ot_{\D}, \unit_{\D} )$ that also satisfies \emph{\textbf{Fg}}, but not the tensor product property for support varieties.
\end{corollary}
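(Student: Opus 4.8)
The plan is to take for $\D$ the very category produced by Theorem~\ref{thm:main}, namely $\D = ( \C \boxtimes \C ) \rtimes C_2$ with $C_2$ acting by interchanging the two tensor factors. Theorem~\ref{thm:main} already gives us everything about $\D$ itself: it is a finite tensor $k$-category, it satisfies \textbf{Fg} (here we use that $k$ is perfect, via \cite[Lemma 5.3]{NP} for the Deligne product together with Lemma~\ref{lem:crossedproductfg}(2) for the crossed product), and it fails the tensor product property for support varieties. So the only thing left to establish is that $\C$ embeds into $\D$ as a finite tensor category, i.e.\ via a fully faithful exact monoidal functor.

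I would obtain this embedding by composing two standard ones. First, $\C$ embeds into the Deligne product $\C \boxtimes \C$ via the functor $C \mapsto C \boxtimes \unit$ (and the analogous assignment on morphisms); this is a structure-preserving functor of finite tensor categories because, by the multiplicativity of the tensor product on pure tensors recalled in Section~\ref{sec:prelim}, one has $(C \boxtimes \unit) \ot_1 (C' \boxtimes \unit) = (C \ot_\C C') \boxtimes (\unit \ot_\C \unit) = (C \ot_\C C') \boxtimes \unit$, the unit $\unit \boxtimes \unit$ of $\C \boxtimes \C$ is the image of $\unit$, and full faithfulness follows from the $\Hom$-space isomorphism $\Hom_{\C \boxtimes \C}(C \boxtimes \unit, C' \boxtimes \unit) \simeq \Hom_\C(C,C') \ot_k \Hom_\C(\unit,\unit) \simeq \Hom_\C(C,C')$, using that $\Hom_\C(\unit,\unit)$ is the ground field since $\unit$ is simple. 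Second, as noted right after the definition of the crossed product category, $\C \boxtimes \C$ embeds into $( \C \boxtimes \C ) \rtimes C_2$ as a finite tensor category via $Y \mapsto (Y, e)$, where $e$ is the identity of $C_2$; this is immediate from the definition of the tensor product on homogeneous objects, since $(Y,e) \ot_2 (Y',e) = (Y \ot_1 e_*(Y'), e) = (Y \ot_1 Y', e)$ as $e_*$ is the identity functor, and the unit of $\D$ is $(\unit_{\C \boxtimes \C}, e)$.

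Composing, $\C$ embeds into $\D$ via $C \mapsto (C \boxtimes \unit, e)$, a fully faithful exact monoidal functor, and the corollary follows. I do not expect any genuine obstacle here: the content is entirely contained in Theorem~\ref{thm:main}, and the embedding is a routine concatenation of the two elementary embeddings above, each of which is either standard for Deligne products or was already recorded in Section~\ref{sec:main}. The one point worth stating carefully, rather than hard, is the verification that the composite functor preserves the monoidal (and abelian) structure and is fully faithful; this is what the two displayed computations above are for, and it can be dispatched in a sentence or two by citing the relevant properties of $\boxtimes$ and $\rtimes$ from Section~\ref{sec:prelim} and Section~\ref{sec:main}.
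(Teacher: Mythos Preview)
Your proposal is correct and matches the paper's own argument essentially verbatim: the paper also takes $\D = (\C \boxtimes \C) \rtimes C_2$, invokes Theorem~\ref{thm:main} for \textbf{Fg} and the failure of the tensor product property, and records the embedding $C \mapsto (C \boxtimes \unit, e)$ as the composite of $C \mapsto C \boxtimes \unit$ and $Y \mapsto (Y,e)$. Your write-up is in fact slightly more detailed than the paper's, since you spell out the full faithfulness via the $\Hom$-isomorphism for Deligne products.
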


\begin{remark}
The crossed product category $( \C \boxtimes \C )  \rtimes C_2$ from Theorem \ref{thm:main} is not braided. This can be seen directly from the proof, by involving Proposition \ref{prop:powers}: the object $X$ from the proof is not projective in $( \C \boxtimes \C )  \rtimes C_2$, whereas the tensor product $X \ot_2 X$ is (here $\ot_2$ denotes the tensor product in $( \C \boxtimes \C )  \rtimes C_2$, as in the proof). One can also convince oneself in a more direct way. Namely, let $M$ be an object in $C$, and denote by $\ot_1$ the tensor product in $\C \boxtimes \C$, again as in the proof of Theorem \ref{thm:main}. Then
\begin{eqnarray*}
( M \boxtimes \unit, \alpha ) \ot_2 ( \unit \boxtimes \unit, \alpha ) & = & \left ( (M \boxtimes \unit ) \ot_1 \alpha_* (  \unit \boxtimes \unit ), \alpha^2 \right ) \\
& = & \left ( (M \boxtimes \unit ) \ot_1 (  \unit \boxtimes \unit ), e \right ) \\
& = & \left ( M \boxtimes \unit, e \right )
\end{eqnarray*}
whereas
\begin{eqnarray*}
( \unit \boxtimes \unit, \alpha ) \ot_2 ( M \boxtimes \unit, \alpha ) & = & \left ( (\unit \boxtimes \unit ) \ot_1 \alpha_* (  M \boxtimes \unit ), \alpha^2 \right ) \\
& = & \left ( (\unit \boxtimes \unit ) \ot_1 (  \unit \boxtimes M ), e \right ) \\
& = & \left ( \unit \boxtimes M, e \right )
\end{eqnarray*}
The objects $( M \boxtimes \unit, e)$ and $( \unit \boxtimes M, e)$ are isomorphic in $( \C \boxtimes \C )  \rtimes C_2$ if and only if the objects $M \boxtimes \unit$ and $\unit \boxtimes M$ are isomorphic in $\C \boxtimes \C$. This is not the case in general.
\end{remark}

In light of the remark, we ask the following question.

\begin{question}
Does every braided finite tensor category that satisfies \textbf{Fg} also satisfy the tensor product property for support varieties?
\end{question}



\end{document}